\theoremstyle{plain}
\newtheorem{thm}{\protect\theoremname}
  \theoremstyle{definition}
  \newtheorem{defn}[thm]{\protect\definitionname}
  \theoremstyle{plain}
  \newtheorem{lem}[thm]{\protect\lemmaname}
  \theoremstyle{remark}
  \newtheorem{rem}[thm]{\protect\remarkname}
  \theoremstyle{plain}
  \newtheorem{prop}[thm]{\protect\propositionname}
\DeclareMathOperator{\cat}{cat}
  \providecommand{\definitionname}{Definition}
  \providecommand{\lemmaname}{Lemma}
  \providecommand{\propositionname}{Proposition}
  \providecommand{\remarkname}{Remark}
\providecommand{\theoremname}{Theorem}
\begin{document}
\title{A multiplicity result for double singularly perturbed elliptic systems}

\thanks{The first author was  supported by Gruppo Nazionale per l'Analisi Matematica, la Probabilit\`{a} e le loro 
Applicazioni (GNAMPA) of Istituto Nazionale di Alta Matematica (INdAM)}

\author{Marco Ghimenti}
\address{M. Ghimenti, \newline Dipartimento di Matematica Universit\`a di Pisa
Largo B. Pontecorvo 5, 56100 Pisa, Italy}
\email{ghimenti@mail.dm.unipi.it}

\author{Anna Maria Micheletti}
\address{A. M. Micheletti, \newline Dipartimento di Matematica Universit\`a di Pisa
Largo B. Pontecorvo 5, 56100 Pisa, Italy}
\email{a.micheletti@dma.unipi.it.}

\begin{abstract}
We show that the number of low energy solutions of a double singularly
perturbed Schroedinger Maxwell system type on a smooth 3 dimensional
manifold $(M,g)$ depends on the topological properties of the manifold.
The result is obtained via Lusternik Schnirelmann category theory.
\end{abstract}

\keywords{Riemannian manifold, Schroedinger Maxwell systems, Schroedinger Poisson Slater equation, singular pertubation}

\subjclass[2010]{35J60,35J47,58E05,81V10}

\maketitle

\section{Introduction}

Given real numbers $q>0$, $\omega>0$, $p>4$, we consider the following
system of Schroedinger Maxwell type on a smooth manifold $M$ endowed
with a Riemannian metric $g$
\begin{equation}
\left\{ \begin{array}{cc}
-\varepsilon^{2}\Delta_{g}u+u+\omega uv=|u|^{p-2}u & \text{ in }M\\
-\varepsilon^{2}\Delta_{g}v+v=qu^{2} & \text{ in }M\\
u>0\text{ in }M
\end{array}\right.\label{eq:sms}
\end{equation}
where $\Delta_{g}$ is the Laplace-Beltrami operator on $M$. 

We want to prove that when the parameter $\varepsilon$ is sufficiently
small, there are many low energy solution of (\ref{eq:sms}). In particular
the number of solutions of (\ref{eq:sms}) is related to the topology
of the manifold $M$. We suppose without loss of generality, that
the manifold $M$ is isometrically embedded in $\mathbb{R}^{n}$ for
some $n$.

Here there is a competition between the two equation, since both share
the same singular perturbation of order $\varepsilon^{2}$. In \cite{GM1,GMruf}
we dealt with a similar system where only  the first equation had
a singular perturbation. In this case the second equation disappears
in the limit. In Section \ref{sub:The-limit-problem} we write the
limit problem taking care of the competition, and we find the model
solution for system (\ref{eq:sms}). 

A problem similar to (\ref{eq:sms}), namely the Schroedinger-Newton
system, has been studied from a dynamical point of view in \cite{PieMar}.
Also in this paper the two equation have the $\varepsilon^{2}$ singular
perturbation. 

Recently, Schroedinger Maxwell type systems received considerable attention
from the mathematical community, we refer, e.g. to \cite{AR,AP,BF,Co,DM,Dav,K,RuJFA}. 
A special case of Schroedinger Maxwell type systems, namely when the system is set in $\mathbb{R}^{3}$, 
takes the name of Schroedinger-Poisson-Slater equation and it arises 
in  Slater approximation of the Hartree-Fock
model. 
We want here to especially mention some result of the existence of solutions, i.e. \cite{AP,DM,He,K,RuARMA}, 
since the limit problem (\ref{eq:sms}) is a Schroedinger-Poisson-Slater type equation.
(for a more exhaustive discussion on Schroedinger-Poisson-Slater and on the 
physical models that leads to this equation we  we refer to \cite{Ia,IR}
and the references therein). 

Our main results is the following.
\begin{thm}
\label{thm:1}Let $4<p<6$. For $\varepsilon$ small enough there
exist at least $\cat(M)$ positive solutions of (\ref{eq:sms}). 
\end{thm}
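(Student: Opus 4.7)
The plan is to follow the Benci--Cerami--Lusternik--Schnirelmann strategy adapted to a reduced functional. First I would eliminate $v$ by observing that, for each $u \in H^1_g(M)$, the linear equation $-\varepsilon^2 \Delta_g v + v = q u^2$ has a unique positive solution $v = \Psi_\varepsilon(u)$ depending smoothly on $u$. Substituting back yields a $C^2$ functional
\[
I_\varepsilon(u) = \frac{1}{2}\int_M \bigl(\varepsilon^2 |\nabla_g u|^2 + u^2\bigr)\, d\mu_g + \frac{\omega q}{4}\int_M u^2\, \Psi_\varepsilon(u)\, d\mu_g - \frac{1}{p}\int_M (u^+)^p\, d\mu_g,
\]
whose positive critical points correspond to solutions of (\ref{eq:sms}). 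I would then restrict $I_\varepsilon$ to its Nehari manifold $\mathcal{N}_\varepsilon=\{u\ne 0:I'_\varepsilon(u)[u]=0\}$ and consider sublevels $\mathcal{N}_\varepsilon^c=\{u\in\mathcal{N}_\varepsilon:I_\varepsilon(u)\le c\}$. By the discussion in Section \ref{sub:The-limit-problem}, let $m_\infty$ denote the ground state level of the associated limit problem on $\mathbb{R}^3$, a Schroedinger--Poisson--Slater system, and let $U$ be its radial ground state.

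Next I would construct two maps linking $M$ to the low energy sublevels of $I_\varepsilon$. For each $\xi\in M$, build a model function $W_{\varepsilon,\xi}$ by transplanting $U(\cdot/\varepsilon)$ near $\xi$ via exponential coordinates and a smooth cut-off, and set $\Phi_\varepsilon(\xi)=t_\varepsilon(\xi)W_{\varepsilon,\xi}$, where $t_\varepsilon(\xi)>0$ is the unique Nehari scalar. A careful energy expansion, handling the nonlocal term through the rescaled equation satisfied by $\Psi_\varepsilon(W_{\varepsilon,\xi})$, should give $I_\varepsilon(\Phi_\varepsilon(\xi))\to m_\infty$ uniformly in $\xi\in M$, so that $\Phi_\varepsilon(M)\subset\mathcal{N}_\varepsilon^{m_\infty+h(\varepsilon)}$ for some $h(\varepsilon)\to 0^+$. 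Dually, using the isometric embedding $M\subset\mathbb{R}^n$, define the barycenter
\[
\beta(u)=\frac{\int_M x\,|u(x)|^p\, d\mu_g}{\int_M |u(x)|^p\, d\mu_g}\in\mathbb{R}^n,
\]
and show, by a local concentration argument using the definition of $m_\infty$, that for $\varepsilon$ small $\beta$ sends $\mathcal{N}_\varepsilon^{m_\infty+h(\varepsilon)}$ into a tubular neighborhood $M_r$ of $M$ that retracts onto $M$, and that $\beta\circ\Phi_\varepsilon$ is homotopic to the inclusion $M\hookrightarrow M_r$. A standard category argument then gives $\cat\bigl(\mathcal{N}_\varepsilon^{m_\infty+h(\varepsilon)}\bigr)\ge\cat(M)$.

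Finally, I would prove that $I_\varepsilon|_{\mathcal{N}_\varepsilon}$ satisfies the Palais--Smale condition at all levels $c<2m_\infty$ via a Struwe-type profile decomposition, ruling out multi-bubble splitting thanks to the subcritical range $4<p<6$; the Lusternik--Schnirelmann theorem on the $C^{1,1}$ manifold $\mathcal{N}_\varepsilon$ then produces at least $\cat(M)$ critical points inside $\mathcal{N}_\varepsilon^{m_\infty+h(\varepsilon)}$. Positivity follows by testing the first equation with $u^-$ and applying the maximum principle, which also forces $v=\Psi_\varepsilon(u)>0$. The main obstacle lies in the energy estimate on $\Phi_\varepsilon(\xi)$: unlike in \cite{GM1,GMruf}, both equations are singularly perturbed at the same scale, so the rescaled $\Psi_\varepsilon(W_{\varepsilon,\xi})$ does not vanish in the limit but must be shown to converge in $H^1(\mathbb{R}^3)$ to the solution of $-\Delta V+V=qU^2$, uniformly in $\xi\in M$. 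A closely related challenge is recovering compactness at the low energy threshold, where the coupling with $\Psi_\varepsilon$ makes the profile analysis more delicate than in the purely Schroedinger case and forces a careful comparison between the functionals on $M$ and on $\mathbb{R}^3$.
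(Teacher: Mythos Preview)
Your plan matches the paper's strategy essentially step for step: the reduction via $\psi_\varepsilon$, the Nehari constraint, the map $\Phi_\varepsilon$ built from transplanted ground states, a barycenter $\beta$, and the Benci--Cerami category comparison; you have also correctly singled out the one genuinely new analytic ingredient, namely the convergence of the rescaled $\psi_\varepsilon(W_{\varepsilon,\xi})$ to $\psi_\infty(U)$ (this is exactly Lemma~\ref{lem:stimaGeps}). Two small points where the paper differs from your outline are worth noting. First, Palais--Smale needs no Struwe-type profile decomposition: since $M$ is compact, the embedding $H^1(M)\hookrightarrow L^p(M)$ is compact for $p<6$, and PS holds at \emph{every} level on $\mathcal{N}_\varepsilon$ (Lemma~\ref{lem:nehari}); no multi-bubble analysis is required. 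Second, the paper weights the barycenter not by $|u|^p$ but by $\Gamma(u)=(\tfrac12-\tfrac1p)\varepsilon^{-3}|u^+|^p-\tfrac{\omega}{4}\varepsilon^{-3}u^2\psi_\varepsilon(u)$, chosen so that $\int_M\Gamma(u)\,d\mu_g=I_\varepsilon(u)$ on $\mathcal{N}_\varepsilon$, which makes the concentration estimate of Proposition~\ref{prop:conc} line up directly with the energy level. Your $|u|^p$ weight would also work, and your PS argument is simply more than is needed here.
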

Here we recall the definition of the Lusternik Schnirelmann category
of a set.
\begin{defn}
Let $X$ a topological space and consider a closed subset $A\subset X$.
We say that $A$ has category $k$ relative to $X$ ($\cat_{X}A=k$)
if $A$ is covered by $k$ closed sets $A_{j}$, $j=1,\dots,k$, which
are contractible in $X$, and $k$ is the minimum integer with this
property. We simply denote $\cat X=\cat_{X}X$.
\end{defn}

\section{Preliminary results}

We endow $H^{1}(M)$ and $L^{p}(M)$ with the following equivalent
norm 
\begin{eqnarray*}
\|u\|_{\varepsilon}^{2}=\frac{1}{\varepsilon^{3}}\int_{M}\varepsilon^{2}|\nabla u|^{2}+u^{2}d\mu_{g} &  & |u|_{\varepsilon,p}^{p}=\frac{1}{\varepsilon^{3}}\int_{M}|u|^{p}d\mu_{g}\\
\|u\|_{H^{1}}^{2}=\int_{M}|\nabla u|^{2}+v^{2}d\mu_{g} &  & |u|_{p}^{p}=\int_{M}|u|^{p}d\mu_{g}
\end{eqnarray*}
and we refer to $H_{\varepsilon}$ (resp. $L_{\varepsilon}^{p}$)
as space $H^{1}(M)$ (resp. $L_{\varepsilon}^{p}$) endowed with the
$\|\cdot\|_{\varepsilon}$ (resp. $|\cdot|_{\varepsilon,p}$)norm.
Obviously, we refer to the scalar product on $H_{\varepsilon}$ as
\[
\left\langle u,v\right\rangle _{\varepsilon}=\frac{1}{\varepsilon^{3}}\int_{M}\varepsilon^{2}\nabla u\nabla v+uvd\mu_{g}.
\]
Following an idea by Benci and Fortunato \cite{BF}, for any $\varepsilon$
we introduce the map $\psi_{\varepsilon}:H^{1}(M)\rightarrow H^{1}(M)$
that is the solution of the equation
\begin{equation}
-\varepsilon^{2}\Delta_{g}v+v=qu^{2}\text{ in }M\label{eq:psi-eps}
\end{equation}

\begin{lem}
\label{lem:psi}The map $\psi:H^{1}(M)\rightarrow H^{1}(M)$ is of
class $C^{2}$ with derivatives $\psi'(u)$ and $\psi''(u)$ which
satisfy 
\begin{eqnarray}
-\varepsilon^{2}\Delta_{g}\psi_{\varepsilon}'(u)[\varphi]+\psi_{\varepsilon}'(u)[\varphi] & = & 2qu\varphi\label{eq:derprima}\\
-\varepsilon^{2}\Delta_{g}\psi_{\varepsilon}''(u)[\varphi_{1},\varphi_{2}]+\psi_{\varepsilon}''(u)[\varphi_{1},\varphi_{2}] & = & 2q\varphi_{1}\varphi_{2}\label{eq:derseconda}
\end{eqnarray}
for any $\varphi,\varphi_{1},\varphi_{2}\in H^{1}(M)$. Moreover $\psi_{\varepsilon}(u)\ge0$. \end{lem}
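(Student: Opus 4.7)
The key observation is that the right-hand side of (\ref{eq:psi-eps}) depends quadratically on $u$, so $\psi_{\varepsilon}$ is in fact a polynomial of degree two in $u$; once well-posedness is established, the $C^{2}$ structure follows almost for free.

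\emph{Well-posedness and positivity.} For fixed $u\in H^{1}(M)$, the Sobolev embedding $H^{1}(M)\hookrightarrow L^{6}(M)$, available since $\dim M=3$, yields $u^{2}\in L^{3}(M)\hookrightarrow L^{6/5}(M)\hookrightarrow H^{-1}(M)$. The bilinear form $(v,\varphi)\mapsto\int_{M}(\varepsilon^{2}\nabla v\cdot\nabla\varphi+v\varphi)\,d\mu_{g}$ is continuous and coercive on $H^{1}(M)$, so the Lax--Milgram theorem produces a unique $\psi_{\varepsilon}(u)\in H^{1}(M)$ solving (\ref{eq:psi-eps}). Testing (\ref{eq:psi-eps}) against the negative part $(\psi_{\varepsilon}(u))^{-}\in H^{1}(M)$ and using the standard chain rule for $v^{-}$ gives
\[
-\varepsilon^{2}\int_{M}|\nabla(\psi_{\varepsilon}(u))^{-}|^{2}\,d\mu_{g}-\int_{M}((\psi_{\varepsilon}(u))^{-})^{2}\,d\mu_{g}=q\int_{M}u^{2}(\psi_{\varepsilon}(u))^{-}\,d\mu_{g}\geq 0,
\]
forcing $(\psi_{\varepsilon}(u))^{-}\equiv 0$ and hence $\psi_{\varepsilon}(u)\geq 0$.

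\emph{First derivative.} Let $T_{\varepsilon}:=(-\varepsilon^{2}\Delta_{g}+\mathrm{id})^{-1}\colon H^{-1}(M)\to H^{1}(M)$, a bounded linear operator. Then $\psi_{\varepsilon}(u)=q\,T_{\varepsilon}(u^{2})$, and expanding $(u+\varphi)^{2}=u^{2}+2u\varphi+\varphi^{2}$ and applying $qT_{\varepsilon}$ yields
\[
\psi_{\varepsilon}(u+\varphi)-\psi_{\varepsilon}(u)-2q\,T_{\varepsilon}(u\varphi)=q\,T_{\varepsilon}(\varphi^{2}).
\]
The candidate derivative $w:=2q\,T_{\varepsilon}(u\varphi)$ satisfies (\ref{eq:derprima}) by construction. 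Testing the equation defining $T_{\varepsilon}(\varphi^{2})$ against itself and applying H\"older and Sobolev inequalities gives
\[
\|T_{\varepsilon}(\varphi^{2})\|_{\varepsilon}\leq C|\varphi^{2}|_{\varepsilon,6/5}\leq C|\varphi|_{\varepsilon,12/5}^{2}\leq C\|\varphi\|_{\varepsilon}^{2}=o(\|\varphi\|_{\varepsilon}),
\]
so indeed $\psi_{\varepsilon}'(u)[\varphi]=w$ satisfies (\ref{eq:derprima}).

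\emph{Second derivative and main obstacle.} Since $\psi_{\varepsilon}$ is exactly quadratic in $u$, the analogous expansion for $\psi_{\varepsilon}'$ gives $\psi_{\varepsilon}'(u+\varphi_{1})[\varphi_{2}]-\psi_{\varepsilon}'(u)[\varphi_{2}]=2q\,T_{\varepsilon}(\varphi_{1}\varphi_{2})$ \emph{identically}, with no remainder. Therefore $\psi_{\varepsilon}''(u)[\varphi_{1},\varphi_{2}]:=2q\,T_{\varepsilon}(\varphi_{1}\varphi_{2})$ is independent of $u$, symmetric, bounded as a bilinear form, and solves (\ref{eq:derseconda}). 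The only genuinely delicate book-keeping point is that all estimates must be compatible with the $\varepsilon$-scaled norms $\|\cdot\|_{\varepsilon}$ and $|\cdot|_{\varepsilon,p}$; since the factor $\varepsilon^{-3}$ in those norms is precisely tuned to the $\varepsilon^{2}$ in front of the Laplacian (a scaling argument in normal coordinates gives $\varepsilon$-independent Sobolev constants for $2\le p\le 6$), the chain of inequalities above holds with constants independent of $\varepsilon$ and the proof goes through without surprises.
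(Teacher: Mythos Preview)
Your argument is correct and is exactly the standard route the paper alludes to: the paper's own proof consists of the single sentence ``The proof is standard,'' so you have simply supplied the details (Lax--Milgram for existence, testing against the negative part for positivity, and exploiting that $\psi_{\varepsilon}=qT_{\varepsilon}(u^{2})$ is quadratic so the second derivative is constant with zero remainder). Your remark on $\varepsilon$-independent Sobolev constants is a useful bonus---the lemma itself does not require it, but later estimates in the paper do.
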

\begin{proof}
The proof is standard.\end{proof}
\begin{rem}
\label{rem:tiquadro}We observe that by simple computation, for any
$t>0$ we have that $\psi_{\varepsilon}(tu)=t^{2}\psi_{\varepsilon}(u)$.
In fact, if $\psi_{\varepsilon}(u)$ solves (\ref{eq:psi-eps}), multiplying
by $t^{2}$ both sides of (\ref{eq:psi-eps}) we get the claim.\end{rem}
\begin{lem}
\label{lem:Tder}The map $T_{\varepsilon}:H_{\varepsilon}\rightarrow\mathbb{R}$
given by
\[
T_{\varepsilon}(u)=\int_{M}u^{2}\psi_{\varepsilon}(u)d\mu_{g}
\]
is a $C^{2}$ map and its first derivative is 
\[
T_{\varepsilon}'(u)[\varphi]=4\int_{M}\varphi u\psi_{\varepsilon}(u)d\mu_{g}.
\]
\end{lem}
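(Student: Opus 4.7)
The plan is to apply the chain rule to the composition $u\mapsto(u^{2},\psi_{\varepsilon}(u))\mapsto\int_{M}u^{2}\psi_{\varepsilon}(u)\,d\mu_{g}$ and then eliminate the resulting term involving $\psi'_{\varepsilon}(u)$ by exploiting the self-adjointness of $-\varepsilon^{2}\Delta_{g}+1$, so that the derivative is expressed purely in terms of $\psi_{\varepsilon}(u)$.

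First I would verify that $T_{\varepsilon}$ is well defined and of class $C^{2}$. Since $\dim M=3$, the Sobolev embedding $H^{1}(M)\hookrightarrow L^{4}(M)$ makes $u\mapsto u^{2}$ a smooth map $H_{\varepsilon}\to L^{2}(M)$, while by Lemma \ref{lem:psi} the map $\psi_{\varepsilon}$ is $C^{2}$ into $H^{1}(M)\hookrightarrow L^{2}(M)$. Composing with the continuous bilinear pairing $L^{2}(M)\times L^{2}(M)\to\mathbb{R}$, $(f,g)\mapsto\int_{M}fg\,d\mu_{g}$, yields that $T_{\varepsilon}$ is $C^{2}$; the second derivative can be computed analogously by invoking (\ref{eq:derseconda}).

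For the derivative formula, a formal product-rule differentiation gives
\[
T'_{\varepsilon}(u)[\varphi]=2\int_{M}u\varphi\psi_{\varepsilon}(u)\,d\mu_{g}+\int_{M}u^{2}\psi'_{\varepsilon}(u)[\varphi]\,d\mu_{g}.
\]
The key step is to show the second integral equals the first. Rewriting (\ref{eq:psi-eps}) as $qu^{2}=-\varepsilon^{2}\Delta_{g}\psi_{\varepsilon}(u)+\psi_{\varepsilon}(u)$, integrating by parts (no boundary terms, since $M$ is closed), and then invoking (\ref{eq:derprima}), one obtains
\[
\int_{M}qu^{2}\psi'_{\varepsilon}(u)[\varphi]\,d\mu_{g}=\int_{M}\psi_{\varepsilon}(u)\bigl(-\varepsilon^{2}\Delta_{g}\psi'_{\varepsilon}(u)[\varphi]+\psi'_{\varepsilon}(u)[\varphi]\bigr)d\mu_{g}=2q\int_{M}u\varphi\psi_{\varepsilon}(u)\,d\mu_{g}.
\]
Dividing by $q$ and substituting back produces the claimed identity $T'_{\varepsilon}(u)[\varphi]=4\int_{M}u\varphi\psi_{\varepsilon}(u)\,d\mu_{g}$.

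The only point that needs any real attention is the self-adjointness swap above --- the Benci--Fortunato trick --- but it presents no genuine obstacle because both $\psi_{\varepsilon}(u)$ and $\psi'_{\varepsilon}(u)[\varphi]$ belong to $H^{1}(M)$ and $M$ is boundaryless, so the Green identity $\int_{M}(-\varepsilon^{2}\Delta_{g}f)g\,d\mu_{g}=\int_{M}\varepsilon^{2}\nabla f\cdot\nabla g\,d\mu_{g}$ applies without further ado.
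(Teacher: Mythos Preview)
Your proof is correct and follows essentially the same approach as the paper: both apply the product rule to get $T'_{\varepsilon}(u)[\varphi]=2\int u\varphi\psi_{\varepsilon}(u)+\int u^{2}\psi'_{\varepsilon}(u)[\varphi]$ and then identify the two terms via the self-adjointness of $-\varepsilon^{2}\Delta_{g}+1$ together with equations (\ref{eq:psi-eps}) and (\ref{eq:derprima}). The only cosmetic difference is that you run the identity from $\int u^{2}\psi'_{\varepsilon}(u)[\varphi]$ toward $2\int u\varphi\psi_{\varepsilon}(u)$ while the paper goes in the opposite direction, and you spell out the $C^{2}$ regularity via Sobolev embeddings where the paper just calls it standard.
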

\begin{proof}
The regularity is standard. The first derivative is 
\[
T_{\varepsilon}'(u)[\varphi]=2\int u\varphi\psi_{\varepsilon}(u)+\int u^{2}\psi_{\varepsilon}'(u)[\varphi].
\]
By (\ref{eq:derprima}) and (\ref{eq:psi-eps}) we have 
\begin{align*}
2\int u\varphi\psi_{\varepsilon}(u) & =\frac{1}{q}\left(-\varepsilon^{2}\int\Delta(\psi_{\varepsilon}'(u)[\varphi])\psi_{\varepsilon}(u)+\int\psi_{\varepsilon}'(u)[\varphi]\psi_{\varepsilon}(u)\right)\\
 & =\frac{1}{q}\left(-\varepsilon^{2}\int\psi_{\varepsilon}'(u)[\varphi]\Delta\psi_{\varepsilon}(u)+\int\psi_{\varepsilon}'(u)[\varphi]\psi_{\varepsilon}(u)\right)\\
 & =\int\psi_{\varepsilon}'(u)[\varphi]u^{2}
\end{align*}
and the claim follows.
\end{proof}
At this point we consider the following functional $I_{\varepsilon}\in C^{2}(H_{\varepsilon},\mathbb{R})$.
\begin{equation}
I_{\varepsilon}(u)=\frac{1}{2}\|u\|_{\varepsilon}^{2}+\frac{\omega}{4}G_{\varepsilon}(u)-\frac{1}{p}|u^{+}|_{\varepsilon,p}^{p}\label{eq:ieps}
\end{equation}
where 
\[
G_{\varepsilon}(u)=\frac{1}{\varepsilon^{3}}\int_{\Omega}u^{2}\psi_{\varepsilon}(u)dx=\frac{1}{\varepsilon^{3}}T_{\varepsilon}(u).
\]
By Lemma \ref{lem:Tder} we have 
\[
I_{\varepsilon}'(u)[\varphi]=\frac{1}{\varepsilon^{3}}\int_{\Omega}\varepsilon^{2}\nabla u\nabla\varphi+u\varphi+\omega u\psi_{\varepsilon}(u)\varphi-(u^{+})^{p-1}\varphi
\]
\[
I_{\varepsilon}'(u)[u]=\|u\|_{\varepsilon}^{2}+\omega G_{\varepsilon}(u)-|u^{+}|_{\varepsilon,p}^{p}
\]
 then if $u$ is a critical points of the functional $I_{\varepsilon}$
the pair of positive functions $(u,\psi_{\varepsilon}(u))$ is a solution
of (\ref{eq:sms}).

We define the following Nehari set
\[
{\mathcal N}_{\varepsilon}=\left\{ u\in H^{1}(M)\smallsetminus0\ :\ N_{\varepsilon}(u):=I'_{\varepsilon}(u)[u]=0\right\} 
\]
The Nehari set has the following properties (for a complete proof
see \cite{GM1})
\begin{lem}
\label{lem:nehari}If $p>4$, ${\mathcal N}_{\varepsilon}$ is a $C^{2}$
manifold and $\inf_{{\mathcal N}_{\varepsilon}}\|u\|_{\varepsilon}>0$. 

If $u\in{\mathcal N}_{\varepsilon}$, then 
\begin{align}
I_{\varepsilon}(u) & =\left(\frac{1}{2}-\frac{1}{p}\right)\|u\|_{\varepsilon}^{2}+\omega\left(\frac{1}{4}-\frac{1}{p}\right)G_{\varepsilon}(u)\nonumber \\
 & =\left(\frac{1}{2}-\frac{1}{p}\right)|u^{+}|_{p,\varepsilon}^{p}-\frac{\omega}{4}G_{\varepsilon}(u)\label{eq:I-nehari}\\
 & =\frac{1}{4}\|u\|_{\varepsilon}^{2}+\left(\frac{1}{4}-\frac{1}{p}\right)|u^{+}|_{p,\varepsilon}^{p}.\nonumber 
\end{align}
and it holds Palais-Smale condition for the functional $I_{\varepsilon}$
on ${\mathcal N}_{\varepsilon}$. 

Finally, for all $w\in H^{1}(M)$ such that $|w^{+}|_{\varepsilon,p}=1$
there exists a unique positive number $t_{\varepsilon}=t_{\varepsilon}(w)$
such that $t_{\varepsilon}(w)w\in{\mathcal N}_{\varepsilon}$. The number
$t_{\varepsilon}$ is the critical point of the function 
\[
H(t)=I_{\varepsilon}(tw)=\frac{1}{2}t^{2}\|w\|_{\varepsilon}^{2}+\frac{t^{4}}{4}\omega G_{\varepsilon}(w)-\frac{t^{p}}{p}.
\]

\end{lem}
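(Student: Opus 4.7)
The plan is to handle the four assertions in turn, using only the algebraic structure of $I_\varepsilon$ and $N_\varepsilon$ together with the homogeneity $G_\varepsilon(tw)=t^{4}G_\varepsilon(w)$ coming from Remark \ref{rem:tiquadro}. For the manifold structure, since $I_\varepsilon$ is $C^{2}$ by Lemma \ref{lem:Tder}, so is $N_\varepsilon$, and I only need $N_\varepsilon'(u)[u]\neq 0$ on $\mathcal N_\varepsilon$. A direct computation gives $N_\varepsilon'(u)[u]=2\|u\|_\varepsilon^{2}+4\omega G_\varepsilon(u)-p|u^{+}|_{\varepsilon,p}^{p}$, and substituting the Nehari relation $|u^{+}|_{\varepsilon,p}^{p}=\|u\|_\varepsilon^{2}+\omega G_\varepsilon(u)$ yields
\[
N_\varepsilon'(u)[u]=(2-p)\|u\|_\varepsilon^{2}+(4-p)\omega G_\varepsilon(u)<0,
\]
since $p>4$, $\|u\|_\varepsilon>0$ and $G_\varepsilon(u)\ge 0$. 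For the lower bound, the Sobolev embedding $H_\varepsilon\hookrightarrow L_\varepsilon^{p}$ and the same Nehari relation give $\|u\|_\varepsilon^{2}\le|u^{+}|_{\varepsilon,p}^{p}\le C\|u\|_\varepsilon^{p}$, hence $\|u\|_\varepsilon\ge C^{-1/(p-2)}>0$.

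The three identities in \eqref{eq:I-nehari} are obtained by plugging the Nehari relation into $I_\varepsilon(u)$ to eliminate, respectively, $|u^{+}|_{\varepsilon,p}^{p}$, $\|u\|_\varepsilon^{2}$ and $\omega G_\varepsilon(u)$; this is a purely algebraic check. For the fibering statement, given $w\in H^{1}(M)$ with $|w^{+}|_{\varepsilon,p}=1$, Remark \ref{rem:tiquadro} shows $G_\varepsilon(tw)=t^{4}G_\varepsilon(w)$, so $H(t)=I_\varepsilon(tw)$ has the expression claimed, and the equation $H'(t)=0$ for $t>0$ reduces to $g(t):=\|w\|_\varepsilon^{2}+t^{2}\omega G_\varepsilon(w)-t^{p-2}=0$. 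Since $g(0)>0$, $g(t)\to-\infty$ as $t\to\infty$ and $g'(t)=t\bigl(2\omega G_\varepsilon(w)-(p-2)t^{p-4}\bigr)$ has exactly one positive zero (recall $p>4$), the function $g$ is first increasing and then strictly decreasing on $(0,\infty)$, yielding a unique positive root $t_\varepsilon(w)$.

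The delicate point is the Palais--Smale condition. Given a PS sequence $(u_{n})\subset\mathcal N_\varepsilon$ for $I_\varepsilon|_{\mathcal N_\varepsilon}$, the third identity in \eqref{eq:I-nehari} makes $I_\varepsilon$ coercive on the Nehari manifold, so $\|u_{n}\|_\varepsilon$ is bounded. By Lagrange multipliers, $I_\varepsilon'(u_{n})=\lambda_{n}N_\varepsilon'(u_{n})+o(1)$; testing against $u_{n}$, using $I_\varepsilon'(u_{n})[u_{n}]=0$, the upper bound $N_\varepsilon'(u_{n})[u_{n}]\le -(p-2)\|u_{n}\|_\varepsilon^{2}$ established in the first step, and the positive lower bound on $\|u_{n}\|_\varepsilon$, I deduce $\lambda_{n}\to 0$. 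Hence $(u_{n})$ is a free PS sequence for $I_\varepsilon$ on $H_\varepsilon$. Up to subsequences $u_{n}\rightharpoonup u$ in $H^{1}(M)$ and, by compactness of $M$, $u_{n}\to u$ strongly in $L^{p}(M)$; the standard computation on $\langle I_\varepsilon'(u_{n})-I_\varepsilon'(u),u_{n}-u\rangle_\varepsilon$ then upgrades this to strong convergence in $H_\varepsilon$.

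The main obstacle is precisely this last passage to the limit, because of the nonlocal term $\int u_{n}\varphi\,\psi_\varepsilon(u_{n})$. This is handled by observing that \eqref{eq:psi-eps} is a linear, uniformly elliptic equation, so $u_{n}\rightharpoonup u$ in $H^{1}(M)$ together with strong $L^{p}$-convergence forces $\psi_\varepsilon(u_{n})\to\psi_\varepsilon(u)$ in $H^{1}(M)$ (by elliptic estimates applied to the difference), which suffices to pass to the limit in all the nonlinear terms. All the details are carried out in \cite{GM1}, to which the proof is reduced.
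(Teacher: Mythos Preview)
Your proof is correct and follows the standard Nehari--fibering approach; the paper itself gives no argument for this lemma, deferring entirely to \cite{GM1}, and your write-up is precisely the kind of computation one finds there. In particular, the key identity $N_\varepsilon'(u)[u]=(2-p)\|u\|_\varepsilon^{2}+(4-p)\omega G_\varepsilon(u)<0$, the coercivity of $I_\varepsilon$ on $\mathcal N_\varepsilon$ via the third line of \eqref{eq:I-nehari}, the Lagrange-multiplier reduction $\lambda_n\to 0$, and the use of the compact embedding $H^{1}(M)\hookrightarrow L^{p}(M)$ together with the elliptic estimate $\|\psi_\varepsilon(u_n)-\psi_\varepsilon(u)\|_\varepsilon\le C|u_n^{2}-u^{2}|_{6/5,\varepsilon}$ are exactly the ingredients of the argument in \cite{GM1}.
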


\subsection{\label{sub:The-limit-problem}The limit problem }

Consider the following problem in the whole space.
\begin{equation}
\left\{ \begin{array}{cc}
-\Delta u+u+\omega uv=|u|^{p-2}u & \text{ in }\mathbb{R}^{3}\\
-\Delta v+v=qu^{2} & \text{ in }\mathbb{R}^{3}\\
u>0\text{ in }\mathbb{R}^{3}
\end{array}\right.\label{eq:PL}
\end{equation}
In an analogous way we define the function $\psi_{\infty}(u)$ as
a solution of the second equation and, as before, we can define a
functional
\[
I_{\infty}(u)=\frac{1}{2}\|u\|_{H^{1}}^{2}+\frac{\omega}{4}G(u)-\frac{1}{p}|u^{+}|_{p}^{p}
\]
where $G(u)=\int_{\mathbb{R}^{3}}u^{2}\psi_{\infty}(u)dx$ and the
Nehari manifold $\mathcal{N}_{\infty}=\left\{ u\in H^{1}(\mathbb{R}^{3})\smallsetminus0\ :\ I_{\infty}'(u)[u]=0\right\} .$
It is easy to prove (see \cite{He}) that the value 
\[
m_{\infty}=\inf_{\mathcal{N}_{\infty}}I_{\infty}
\]
is attained at least by a function $U$ which is a solution of problem
(\ref{eq:PL}). 

We will refer at problem (\ref{eq:PL}) as the limit problem. We set
\[
U_{\varepsilon}(x)=U\left(\frac{x}{\varepsilon}\right)
\]
and the function $U_{\varepsilon}$ will be the model solution for
a solution of problem (\ref{eq:sms}).

\section{Main ingredient of the proof}

We sketch the proof of Theorem \ref{thm:1}. First of all, it is easy
to see that the functional $I_{\varepsilon}\in C^{2}$ is bounded
below and satisfies Palais Smale condition on the complete $C^{2}$ manifold
${\mathcal N}_{\varepsilon}$. Then we have, by well known results, that
$I_{\varepsilon}$ has at least $\cat I_{\varepsilon}^{d}$ critical
points in the sublevel
\[
I_{\varepsilon}^{d}=\left\{ u\in H^{1}\ :\ I_{\varepsilon}(u)\le d\right\} .
\]
We prove that, for $\varepsilon$ and $\delta$ small enough, it holds
\[
\cat M\le\cat\left({\mathcal N}_{\varepsilon}\cap I_{\varepsilon}^{m_{\infty}+\delta}\right)
\]
where $m_{\infty}$ has been defined in the previous section. 

To get the inequality $\cat M\le\cat\left({\mathcal N}_{\varepsilon}\cap I_{\varepsilon}^{m_{\infty}+\delta}\right)$
we build two continuous operators
\begin{eqnarray*}
\Phi_{\varepsilon} & : & M\rightarrow{\mathcal N}_{\varepsilon}\cap I_{\varepsilon}^{m_{\infty}+\delta}\\
\beta & : & {\mathcal N}_{\varepsilon}\cap I_{\varepsilon}^{m_{\infty}+\delta}\rightarrow M^{+}.
\end{eqnarray*}
where 
\[
M^{+}=\left\{ x\in\mathbb{R}^{n}\ :\ d(x,M)<R\right\} 
\]
with $R$ small enough so that $\cat(M^{+})=\cat(M)$. Without loss
of generality, we can suppose $R=r$ the injectivity radius of $M$,
in order to simplify the notations.

Following an idea in \cite{BC1}, we build these operators $\Phi_{\varepsilon}$
and $\beta$ such that $\beta\circ\Phi_{\varepsilon}:M\rightarrow M^{+}$
is homotopic to the immersion $i:M\rightarrow M^{+}$. By a classical
result on topology (which we summarize in Remark \ref{rem:cat}) we
have 
\[
\cat M\le\cat\left({\mathcal N}_{\varepsilon}\cap I_{\varepsilon}^{m_{\infty}+\delta}\right)
\]
and the first claim of Theorem \ref{thm:1} is proved.
\begin{rem}
\label{rem:cat}Let $X_{1}$ and $X_{2}$, $X_{3}$ be topological
spaces with $X_{1}$ and $X_{3}$ which are homotopically identical.
If $g_{1}:X_{1}\rightarrow X_{2}$ and $g_{2}:X_{2}\rightarrow X_{3}$
are continuous operators such that $g_{2}\circ g_{1}$ is homotopic
to the identity on $X_{1}$, then $\cat X_{1}\leq\cat X_{2}$ . 
\end{rem}

\section{The map $\Phi_{\varepsilon}$ }

For every $\xi\in M$ we define the function
\begin{equation}
W_{\xi,\varepsilon}(x)=U_{\varepsilon}(\exp_{\xi}^{-1}x)\chi(|\exp_{\xi}^{-1}x|)\label{eq:Weps}
\end{equation}
where $\chi:\mathbb{R}^{+}\rightarrow\mathbb{R}^{+}$ is a cut off
function, that is $\chi\equiv1$ for $t\in[0,r/2)$, $\chi\equiv0$
for $t>r$ and $|\chi'(t)|\le2/r$. Here $\exp_{\xi}$ are the normal
coordinates centered in $\xi\in M$ and $r$ is the injectivity radius
of $M$. We recall the following well known expansion of the metric
$g$ in normal coordinates:
\begin{eqnarray}
g_{ij}(\varepsilon z)=\delta_{ij}+o(\varepsilon|z|) &  & |g(\varepsilon z)|^{\frac{1}{2}}=1+o(\varepsilon|z|)\label{eq:gexp}
\end{eqnarray}

We can define a map
\begin{eqnarray*}
\Phi_{\varepsilon} & : & M\rightarrow{\mathcal N}_{\varepsilon}\\
\Phi_{\varepsilon}(\xi) & = & t_{\varepsilon}(W_{\xi,\varepsilon})W_{\xi,\varepsilon}
\end{eqnarray*}

\begin{rem}
\label{w}We have that $W_{\varepsilon,\xi}\in H^{1}(M)$ and the
following limits hold uniformly with respect to $\xi\in M$
\begin{eqnarray*}
\|W_{\varepsilon,\xi}\|_{\varepsilon} & \rightarrow & \|U\|_{H^{1}(\mathbb{R}^{3})}\\
|W_{\varepsilon,\xi}|_{\varepsilon,t} & \rightarrow & \|U\|_{L^{t}(\mathbb{R}^{3})}\text{ for all }2\le t\le6
\end{eqnarray*}
\end{rem}
\begin{lem}
\label{lem:stimaGeps}We have that 
\[
\lim_{\varepsilon\rightarrow0}G_{\varepsilon}(W_{\varepsilon,\xi})=G(U)=\int_{\mathbb{R}^{3}}qU^{2}\psi(U)dx
\]
 uniformly with respect to $\xi\in M$\end{lem}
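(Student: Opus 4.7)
The plan is to move the computation to $\mathbb{R}^{3}$ by the exponential chart and the $\varepsilon$-rescaling, identify the limit equation for the rescaled auxiliary function, and then pass to the limit in the integral using the exponential decay of $U$.

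First I would write out, in normal coordinates $y=\exp_{\xi}^{-1}x$ and after the dilation $y=\varepsilon z$, both the integral defining $G_{\varepsilon}$ and the equation satisfied by $\psi_{\varepsilon}(W_{\xi,\varepsilon})$. Set $\Psi_{\varepsilon,\xi}(z)=\psi_{\varepsilon}(W_{\xi,\varepsilon})(\exp_{\xi}(\varepsilon z))$, defined on $B(0,r/\varepsilon)\subset\mathbb{R}^{3}$ (and extended by its natural continuation, or by zero, outside). Using \eqref{eq:gexp} and a standard computation one finds
\[
G_{\varepsilon}(W_{\xi,\varepsilon})=\int_{B(0,r/\varepsilon)}U^{2}(z)\chi^{2}(\varepsilon|z|)\,\Psi_{\varepsilon,\xi}(z)\,|g(\varepsilon z)|^{1/2}dz,
\]
and, again by the usual rescaling,
\[
-\Delta_{g_{\varepsilon}}\Psi_{\varepsilon,\xi}+\Psi_{\varepsilon,\xi}=qU^{2}(z)\chi^{2}(\varepsilon|z|)\quad\text{on }B(0,r/\varepsilon),
\]
where $g_{\varepsilon}$ is the metric with coefficients $g_{ij}(\varepsilon z)$, which converges to $\delta_{ij}$ uniformly in $\xi$ by \eqref{eq:gexp}.

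Next I would test the rescaled equation against $\Psi_{\varepsilon,\xi}$ itself. Since the right-hand side is controlled by $qU^{2}\in L^{6/5}(\mathbb{R}^{3})$ and $U$ decays exponentially, Hölder and Sobolev give a bound
\[
\|\Psi_{\varepsilon,\xi}\|_{H^{1}(\mathbb{R}^{3})}\le C
\]
with $C$ independent of $\varepsilon$ and $\xi$. Along a subsequence, $\Psi_{\varepsilon,\xi}\rightharpoonup\Psi_{0}$ in $H^{1}(\mathbb{R}^{3})$ and $\Psi_{\varepsilon,\xi}\to\Psi_{0}$ in $L^{2}_{\mathrm{loc}}(\mathbb{R}^{3})$. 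Testing the rescaled equation with an arbitrary $\varphi\in C_{c}^{\infty}(\mathbb{R}^{3})$, using $g_{\varepsilon}\to\delta$ and $\chi(\varepsilon|\cdot|)\to 1$, I would identify $\Psi_{0}$ as the unique $H^{1}$ solution of $-\Delta\Psi_{0}+\Psi_{0}=qU^{2}$, i.e.\ $\Psi_{0}=\psi_{\infty}(U)$. Uniqueness of the limit implies the convergence holds along the full family, not just a subsequence.

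Finally, to pass to the limit in the displayed integral I would use dominated convergence: $U^{2}$ decays exponentially, so $U^{2}\chi^{2}(\varepsilon|\cdot|)|g(\varepsilon\cdot)|^{1/2}$ is dominated by $CU^{2}\in L^{6/5}$, and $\Psi_{\varepsilon,\xi}\to\psi_{\infty}(U)$ in $L^{6}_{\mathrm{loc}}$; splitting the integral into a large ball $B(0,R)$ and its complement, the tail is majorized by $C\|\Psi_{\varepsilon,\xi}\|_{L^{6}}\left(\int_{|z|>R}U^{12/5}\right)^{5/6}$ which is small uniformly in $\varepsilon,\xi$, and the bulk converges by strong $L^{6}_{\mathrm{loc}}$ convergence.

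The main obstacle is the uniformity in $\xi$. I would handle it by observing that compactness of $M$ makes the metric expansion \eqref{eq:gexp} uniform in $\xi$, so the bound $C$ above and the rates of convergence $g_{\varepsilon}\to\delta$ are $\xi$-independent; together with a standard contradiction argument (if convergence failed uniformly, a sequence $\xi_{n}\to\bar\xi$, $\varepsilon_{n}\to 0$ would contradict the identification of the unique limit $\psi_{\infty}(U)$), this yields the uniform conclusion.
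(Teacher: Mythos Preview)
Your proposal is correct and follows essentially the same route as the paper: pass to normal coordinates and rescale, bound the rescaled auxiliary function in $H^{1}(\mathbb{R}^{3})$, identify its weak limit as $\psi_{\infty}(U)$ by testing against $C_{c}^{\infty}$ functions (uniqueness of the limit removing the need for subsequences), and then pass to the limit in the integral. One small slip: the Sobolev embedding $H^{1}(\mathbb{R}^{3})\hookrightarrow L^{6}$ is not compact, so weak $H^{1}$ convergence does not give strong $L^{6}_{\mathrm{loc}}$ convergence; the paper sidesteps your ball/complement split by pairing the \emph{weak} $L^{6}$ convergence of $\tilde{\psi}_{\varepsilon}$ directly with the strong $L^{6/5}$ convergence $U^{2}\chi^{2}(\varepsilon|\cdot|)|g(\varepsilon\cdot)|^{1/2}\to U^{2}$, which is cleaner and immediately uniform in $\xi$.
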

\begin{proof}
We set, for the sake of simplicity, $\psi_{\varepsilon}(x):=\psi_{\varepsilon}(W_{\varepsilon,\xi})(x)$,
and we define 
\[
\tilde{\psi}_{\varepsilon}(z)=\psi_{\varepsilon}(\exp_{\xi}(\varepsilon z))\chi_{r}(|\varepsilon z|)\text{ for }z\in\mathbb{R}^{3}
\]
It is easy to see that $\|\tilde{\psi}_{\varepsilon}\|_{H^{1}(\mathbb{R}^{3})}\le C\|\psi_{\varepsilon}\|_{\varepsilon}.$
Moreover, by (\ref{eq:psi-eps})
\[
\|\psi_{\varepsilon}\|_{\varepsilon}^{2}\le C\|W_{\varepsilon,\xi}\|_{\frac{12}{5},\varepsilon}^{2}\|\psi_{\varepsilon}\|_{\varepsilon}\le C\|U\|_{\frac{12}{5}}^{2}\|\psi_{\varepsilon}\|_{\varepsilon}
\]
so $\tilde{\psi}_{\varepsilon}$ is bounded in $H^{1}(\mathbb{R}^{3})$
so there exists $\bar{\psi}\in H^{1}(\mathbb{R}^{3})$ such that,
up to extracting a subsequence, $\tilde{\psi}_{\varepsilon_{k}}\rightharpoonup\bar{\psi}$
weakly in $H^{1}(\mathbb{R}^{3})$. 

First, we want to prove that $\bar{\psi}$ is a weak solution of
\[
-\Delta v+v=qU^{2}
\]
that is $\bar{\psi}=\psi_{\infty}(U)$. Given $f\in C_{0}^{\infty}(\mathbb{R}^{3})$,
we have that the $\text{spt }f\subset B(0,T)$ for some $T>0$, so
eventually $ $$\text{spt }f\subset B(0,r/\varepsilon_{k})$. Thus
we can define 
\[
f_{k}(x):=f\left(\frac{1}{\varepsilon_{k}}\exp_{\xi}^{-1}(x)\right)
\]
and we have that $f_{k}(x)$ is compactly supported in $B_{g}(\xi,r)$.
By definition of $\psi_{\varepsilon}(x)$ we have that 
\begin{equation}
\int_{M}\varepsilon_{k}^{2}\nabla_{g}\psi_{\varepsilon_{k}}\nabla_{g}f_{k}+\psi_{\varepsilon_{k}}f_{k}d\mu_{g}=q\int_{M}W_{\varepsilon_{k},\xi}^{2}f_{k}d\mu_{g}.\label{eq:soldeb1}
\end{equation}
By the change of variables $x=\exp_{\xi}(\varepsilon_{k}z)$ and by
(\ref{eq:gexp}) we get 
\begin{align*}
\frac{1}{\varepsilon_{k}^{3}}\int_{M}\varepsilon_{k}^{2}\nabla_{g}\psi_{\varepsilon_{k}}\nabla_{g}f_{k}+\psi_{\varepsilon_{k}}f_{k}d\mu_{g}= & \int_{B(0,r/\varepsilon_{k})}\left[g_{ij}(\varepsilon_{k}z)\partial_{i}\tilde{\psi}_{\varepsilon_{k}}(z)\partial_{j}f(z)+\tilde{\psi}_{\varepsilon_{k}}(z)f(z)\right]|g(\varepsilon_{k}z)|^{\frac{1}{2}}dz\\
= & \int_{B(0,T)}\nabla\tilde{\psi}_{\varepsilon_{k}}(z)\nabla f(z)+\tilde{\psi}_{\varepsilon_{k}}(z)f(z)dz+o(\varepsilon_{k})
\end{align*}
thus, by weak convergence of $\tilde{\psi}_{\varepsilon}$ we get
\begin{equation}
\frac{1}{\varepsilon_{k}^{3}}\int_{M}\varepsilon_{k}^{2}\nabla_{g}\psi_{\varepsilon_{k}}\nabla_{g}f_{k}+\psi_{\varepsilon_{k}}f_{k}d\mu_{g}\rightarrow\int_{\mathbb{R}^{3}}\nabla\bar{\psi}(z)\nabla f(z)+\bar{\psi}(z)f(z)dz.\label{eq:conv1}
\end{equation}
as $\varepsilon_{k}\rightarrow0$. In the same way we get 
\[
\frac{q}{\varepsilon_{k}^{3}}\int_{M}W_{\varepsilon_{k},\xi}^{2}f_{k}d\mu_{g}=q\int_{B(0,r/\varepsilon_{k})}U^{2}(z)f(z)|g(\varepsilon_{k}z)|^{\frac{1}{2}}dz=q\int_{\mathbb{R}^{3}}U^{2}(z)f(z)dz+o(\varepsilon_{k})
\]
and 
\begin{equation}
\frac{q}{\varepsilon_{k}^{3}}\int_{M}W_{\varepsilon_{k},\xi}^{2}f_{k}d\mu_{g}\rightarrow q\int_{\mathbb{R}^{3}}U^{2}(z)f(z)dz.\label{eq:conv2}
\end{equation}
 By (\ref{eq:soldeb1}), (\ref{eq:conv1}), (\ref{eq:conv2}) we get
that, for any $f\in C_{0}^{\infty}(\mathbb{R}^{3})$ it holds
\[
\int_{\mathbb{R}^{3}}\nabla\bar{\psi}\nabla f+\bar{\psi}f=q\int_{\mathbb{R}^{3}}U^{2}f
\]
 which proves that
\begin{equation}
\tilde{\psi}_{\varepsilon_{k}}\rightharpoonup\psi_{\infty}(U)\text{ weakly in }H^{1}(\mathbb{R}^{3})\label{eq:convdebpsibar}
\end{equation}
To conclude, again by change of variables we have
\[
G_{\varepsilon_{k}}(W_{\varepsilon_{k},\xi})=\frac{1}{\varepsilon_{k}^{3}}\int_{B_{g}(\xi,r)}W_{\varepsilon_{k},\xi}^{2}\psi(W_{\varepsilon_{k},\xi})d\mu_{g}=\int_{\mathbb{R}^{3}}U^{2}(z)\chi^{2}(|\varepsilon_{k}z|)\tilde{\psi}_{\varepsilon_{k}}|g(\varepsilon_{k}z)|^{\frac{1}{2}}dz.
\]
Since $U^{2}\in L^{6/5}(\mathbb{R}^{3})$ one has $ $
\[
U^{2}(z)\chi^{2}(|\varepsilon_{k}z|)|g(\varepsilon_{k}z)|^{\frac{1}{2}}\rightarrow U^{2}(z)\text{ strongly in }L^{6/5}(\mathbb{R}^{3}),
\]
that, combined with (\ref{eq:convdebpsibar}) concludes the proof.\end{proof}
\begin{prop}
\label{prop:phieps}For all $\varepsilon>0$ the map $\Phi_{\varepsilon}$
is continuous. Moreover for any $\delta>0$ there exists $\varepsilon_{0}=\varepsilon_{0}(\delta)$
such that, if $\varepsilon<\varepsilon_{0}$ then $I_{\varepsilon}\left(\Phi_{\varepsilon}(\xi)\right)<m_{\infty}+\delta$.\end{prop}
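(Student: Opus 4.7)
The plan is to treat continuity and the energy bound separately, leveraging the uniform-in-$\xi$ convergences from Remark~\ref{w} and Lemma~\ref{lem:stimaGeps}.

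For fixed $\varepsilon>0$, continuity of $\Phi_\varepsilon$ reduces to continuity of its two factors. The map $\xi\mapsto W_{\varepsilon,\xi}$ into $H^1(M)$ is continuous because $(\xi,x)\mapsto\exp_{\xi}^{-1}(x)$ is smooth on a neighbourhood of the diagonal and the cut-off in \eqref{eq:Weps} localises $W_{\varepsilon,\xi}$ inside the injectivity radius. Continuity of the scalar $w\mapsto t_\varepsilon(w)$ on $H^1(M)\setminus\{0\}$ follows from the implicit function theorem applied to $\frac{d}{dt}I_\varepsilon(tw)=0$, the second derivative at $t_\varepsilon$ being nonzero thanks to strict concavity at the maximum (which is guaranteed by $p>4$). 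Taking the product then gives continuity of $\Phi_\varepsilon$.

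For the energy estimate, set
\[
H_\varepsilon(t)=I_\varepsilon(tW_{\varepsilon,\xi})=\tfrac{t^{2}}{2}\|W_{\varepsilon,\xi}\|_{\varepsilon}^{2}+\tfrac{\omega t^{4}}{4}G_\varepsilon(W_{\varepsilon,\xi})-\tfrac{t^{p}}{p}|W_{\varepsilon,\xi}^{+}|_{\varepsilon,p}^{p},
\]
and let $H_\infty(t)=I_\infty(tU)$. Since $U$ is the ground state on $\mathcal{N}_\infty$, the function $H_\infty$ attains its unique maximum at $t=1$ with value $m_\infty$. By Remark~\ref{w} and Lemma~\ref{lem:stimaGeps}, each of the three coefficients of $H_\varepsilon$ converges to the corresponding coefficient of $H_\infty$, uniformly in $\xi\in M$; hence $H_\varepsilon\to H_\infty$ uniformly in $t$ on compact subsets of $(0,\infty)$ and uniformly in $\xi$.

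To finish, I would verify that $t_\varepsilon(W_{\varepsilon,\xi})$ is bounded and bounded away from zero uniformly in $\xi$, using the critical-point equation
\[
\|W_{\varepsilon,\xi}\|_{\varepsilon}^{2}+t^{2}\omega G_\varepsilon(W_{\varepsilon,\xi})=t^{p-2}|W_{\varepsilon,\xi}^{+}|_{\varepsilon,p}^{p},
\]
combined with the fact that $|W_{\varepsilon,\xi}^{+}|_{\varepsilon,p}\to|U|_{p}>0$ by Remark~\ref{w}. Uniqueness of the maximiser of $H_\infty$ then forces $t_\varepsilon(W_{\varepsilon,\xi})\to 1$ uniformly in $\xi$, and so
\[
I_\varepsilon(\Phi_\varepsilon(\xi))=H_\varepsilon\bigl(t_\varepsilon(W_{\varepsilon,\xi})\bigr)\longrightarrow H_\infty(1)=m_\infty
\]
uniformly in $\xi\in M$, which produces the desired $\varepsilon_0(\delta)$. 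The only delicate point is securing uniformity in $\xi$ at every step; this is precisely why Remark~\ref{w} and Lemma~\ref{lem:stimaGeps} are phrased as uniform statements, and the compactness of $M$ closes the loop.
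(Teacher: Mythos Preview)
Your proposal is correct and follows essentially the same route as the paper: continuity via the continuous dependence of $t_\varepsilon(w)$ on $w$, then the uniform convergence $t_\varepsilon(W_{\varepsilon,\xi})\to 1$ obtained from the critical-point equation together with Remark~\ref{w} and Lemma~\ref{lem:stimaGeps}, and finally the evaluation of $I_\varepsilon$ at $t_\varepsilon W_{\varepsilon,\xi}$ converging to $m_\infty$. The only cosmetic difference is that the paper computes the limit of $I_\varepsilon(\Phi_\varepsilon(\xi))$ via the Nehari identity \eqref{eq:I-nehari}, whereas you phrase it as uniform convergence of the one-variable function $H_\varepsilon$ to $H_\infty$ on compact $t$-intervals; the content is the same.
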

\begin{proof}
It is easy to see that $\Phi_{\varepsilon}$ is continuous because
$t_{\varepsilon}(w)$ depends continuously on $w\in H_{g}^{1}(M)$.

At this point we prove that $t_{\varepsilon}(W_{\varepsilon,\xi})\rightarrow1$
uniformly with respect to $\xi\in M$. In fact, by Lemma \ref{lem:nehari}
$t_{\varepsilon}(W_{\varepsilon,\xi})$ is the unique solution of
\[
t^{2}\|W_{\varepsilon,\xi}\|_{\varepsilon}^{2}+\omega G_{\varepsilon}(tW_{\varepsilon,\xi})-t^{p}|W_{\varepsilon,\xi}|_{\varepsilon,p}^{p}=0
\]
which, in light of Remark \ref{rem:tiquadro} can by rewritten as
\[
\|W_{\varepsilon,\xi}\|_{\varepsilon}^{2}+\omega t^{2}G_{\varepsilon}(W_{\varepsilon,\xi})-t^{p-2}|W_{\varepsilon,\xi}|_{\varepsilon,p}^{p}=0
\]
By Remark \ref{w} and Lemma \ref{lem:stimaGeps} we have the claim.
In fact, we recall that, since $U$ is a solution of (\ref{eq:PL})
it holds $\|U\|_{H^{1}(\mathbb{R}^{3})}^{2}+\omega G(U)-|U|_{L^{p}(\mathbb{R}^{3})}^{p}=0$.

At this point, we have 
\[
I_{\varepsilon}\left(t_{\varepsilon}(W_{\varepsilon,\xi})W_{\varepsilon,\xi}\right)=\left(\frac{1}{2}-\frac{1}{p}\right)\|W_{\varepsilon,\xi}\|_{\varepsilon}^{2}t_{\varepsilon}^{2}+\omega\left(\frac{1}{4}-\frac{1}{p}\right)t_{\varepsilon}^{4}G_{\varepsilon}(W_{\varepsilon,\xi})
\]
Again, by Remark \ref{w} and Lemma \ref{lem:stimaGeps} and since
$t_{\varepsilon}(W_{\varepsilon,\xi})\rightarrow1$ we have

\[
I_{\varepsilon}\left(t_{\varepsilon}(W_{\varepsilon,\xi})W_{\varepsilon,\xi}\right)\rightarrow\left(\frac{1}{2}-\frac{1}{p}\right)\|U\|_{H^{1}(\mathbb{R}^{3})}^{2}+\omega\left(\frac{1}{4}-\frac{1}{p}\right)G(U)=m_{\infty}
\]
that concludes the proof.\end{proof}
\begin{rem}
\label{rem:limsup}We set
\[
m_{\varepsilon}=\inf_{{\mathcal N}_{\varepsilon}}I_{\varepsilon.}
\]
By Proposition \ref{prop:phieps} we have that 

\begin{equation}
\limsup_{\varepsilon\rightarrow0}m_{\varepsilon}\le m_{\infty.}\label{eq:limsup}
\end{equation}

\end{rem}

\section{The map $\beta$}

For any $u\in{\mathcal N}_{\varepsilon}$ we can define a point $\beta(u)\in\mathbb{R}^{n}$
by 
\[
\beta(u)=\frac{\int_{M}x\Gamma(u)d\mu_{g}}{\int_{M}\Gamma(u)d\mu_{g}}
\]
where $\Gamma(u)=\left(\frac{1}{2}-\frac{1}{p}\right)\frac{1}{\varepsilon^{3}}|u^{+}|^{p}-\frac{\omega}{4}\frac{1}{\varepsilon^{3}}u^{2}\psi_{\varepsilon}(u)$.
Immediately one has that the function $\beta$ is well defined in
${\mathcal N}_{\varepsilon}$, since $\int_{M}\Gamma(u)d\mu_{g}=I_{\varepsilon}(u)\ge m_{\varepsilon}$
\begin{lem}
There exists $\alpha>0$ such that $m_{\varepsilon}\ge\alpha$ for
all $\varepsilon$. \end{lem}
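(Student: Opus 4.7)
The plan is to combine the Nehari identity with a uniform Sobolev-type embedding in the $\varepsilon$-rescaled norms, and then read off the lower bound from the third representation of $I_\varepsilon$ in Lemma \ref{lem:nehari}. For $u\in{\mathcal N}_\varepsilon$, that identity
\[
I_\varepsilon(u)=\frac{1}{4}\|u\|_\varepsilon^{2}+\left(\frac{1}{4}-\frac{1}{p}\right)|u^{+}|_{p,\varepsilon}^{p}
\]
forces $I_\varepsilon(u)\ge\frac{1}{4}\|u\|_\varepsilon^{2}$, since $p>4$ makes both coefficients nonnegative. So it is enough to bound $\|u\|_\varepsilon$ away from zero uniformly in $\varepsilon$.

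First I would use $\psi_\varepsilon(u)\ge 0$ from Lemma \ref{lem:psi}, which gives $G_\varepsilon(u)\ge 0$, and combine this with the Nehari identity $\|u\|_\varepsilon^{2}+\omega G_\varepsilon(u)=|u^{+}|_{p,\varepsilon}^{p}$ to get $\|u\|_\varepsilon^{2}\le|u|_{p,\varepsilon}^{p}$. Next I would prove that there exists $C>0$, independent of $\varepsilon\in(0,1]$, such that
\[
|u|_{p,\varepsilon}^{p}\le C\|u\|_\varepsilon^{p}\qquad\text{for every }u\in H^{1}(M).
\]
Chaining the two inequalities gives $\|u\|_\varepsilon^{p-2}\ge 1/C$, hence $\|u\|_\varepsilon\ge c_{0}>0$ uniformly in $\varepsilon$, and therefore $m_\varepsilon\ge\frac{1}{4}c_{0}^{2}=:\alpha$.

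The main obstacle is establishing this uniform $\varepsilon$-Sobolev embedding; everything else is bookkeeping on the Nehari identity. I would derive it from Gagliardo--Nirenberg on the compact $3$-dimensional manifold $M$: for $2\le p\le 6$ and $\theta=3(p-2)/(2p)$,
\[
|u|_{p}^{p}\le K\bigl(\|\nabla u\|_{2}+\|u\|_{2}\bigr)^{p\theta}\|u\|_{2}^{p(1-\theta)}.
\]
Reading off from the definition of $\|\cdot\|_\varepsilon$ that $\|\nabla u\|_{2}^{2}\le\varepsilon\|u\|_\varepsilon^{2}$ and $\|u\|_{2}^{2}\le\varepsilon^{3}\|u\|_\varepsilon^{2}$, and checking that $p\theta/2+3p(1-\theta)/2=3$ independently of $p$, one gets $|u|_{p}^{p}\le K'\varepsilon^{3}\|u\|_\varepsilon^{p}$; dividing by $\varepsilon^{3}$ yields the claimed inequality. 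Conceptually this is the scale invariance of $H^{1}(\mathbb{R}^{3})\hookrightarrow L^{p}(\mathbb{R}^{3})$ transplanted to $M$, and it is the only place where the smallness of $\varepsilon$ and the assumption $p\le 6$ are needed.
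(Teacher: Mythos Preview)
Your argument is correct and matches the paper's own proof in all essentials: both combine the Nehari identity (using $G_\varepsilon\ge 0$) with the $\varepsilon$-uniform Sobolev inequality $|u|_{\varepsilon,p}\le C\|u\|_\varepsilon$ to force $\|u\|_\varepsilon$ (equivalently $t_\varepsilon$) bounded below, and then read off the lower bound from the third form of $I_\varepsilon$ in Lemma~\ref{lem:nehari}. Your presentation is in fact slightly more direct---you work with $u\in\mathcal N_\varepsilon$ itself rather than normalizing $|w^+|_{\varepsilon,p}=1$ and arguing by contradiction on $t_\varepsilon(w_n)\to 0$---and you supply the Gagliardo--Nirenberg scaling computation that the paper simply asserts.
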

\begin{proof}
Take $w$ such that $|w^{+}|_{\varepsilon,p}=1$, and $t_{\varepsilon}=t_{\varepsilon}(w)$
such that $t_{\varepsilon}w\in{\mathcal N}_{\varepsilon}$. By (\ref{eq:I-nehari})
we have
\[
I_{\varepsilon}(t_{\varepsilon}w)=\frac{t_{\varepsilon}^{2}}{4}\|w\|_{\varepsilon}^{2}+\left(\frac{1}{4}-\frac{1}{p}\right)t_{\varepsilon}^{p}\ge\left(\frac{1}{4}-\frac{1}{p}\right)t_{\varepsilon}^{p}.
\]
Moreover, we have that $\inf_{|w^{+}|_{\varepsilon,p}=1}t_{\varepsilon}(w)>0$.
In fact, suppose that there exists a sequence $w_{n}$ such that $|w^{+}|_{\varepsilon,p}=1$
and $t_{\varepsilon}(w_{n})\rightarrow0$. Since $t_{\varepsilon}(w_{n})w_{n}\in{\mathcal N}_{\varepsilon}$
it holds
\[
1=|w_{n}^{+}|_{\varepsilon,p}=\frac{1}{t_{\varepsilon}(w_{n})^{p-2}}\|w_{n}\|_{\varepsilon}^{2}+\omega G_{\varepsilon}(t_{\varepsilon}(w_{n}))\ge\frac{1}{t_{\varepsilon}(w_{n})^{p-2}}\|w_{n}\|_{\varepsilon}^{2}.
\]
Also, we have that there exists a constant $C>0$ which does not depend
on $\varepsilon$ such that $|w_{n}^{+}|_{\varepsilon,p}\le|w_{n}|_{\varepsilon,p}\le C\|w_{n}\|_{\varepsilon}$,
so 
\[
1\ge\frac{1}{Ct_{\varepsilon}(w_{n})^{p-2}}\rightarrow+\infty
\]
that is a contradiction. This proves that $m_{\varepsilon}\ge\alpha$
for some $\alpha>0$.
\end{proof}
Now we have to prove that, if $u\in{\mathcal N}_{\varepsilon}\cap I_{\varepsilon}^{m_{\infty}+\delta}$
then $\beta(u)\in M^{+}$.

Let us consider the following partitions of $M$. For a given $\varepsilon>0$
we say that a finite partition${\mathcal P}_{\varepsilon}=\left\{ P_{j}^{\varepsilon}\right\} _{j\in\Lambda_{\varepsilon}}$
of $M$ is a \textquotedblleft good\textquotedblright{} partition
if: for any $j\in\Lambda_{\varepsilon}$ the set $P_{j}^{\varepsilon}$
is closed; $P_{i}^{\varepsilon}\cap P_{j}^{\varepsilon}\subset\partial P_{i}^{\varepsilon}\cap\partial P_{j}^{\varepsilon}$
for any $i\ne j$; there exist $r_{1}(\varepsilon),r_{2}(\varepsilon)>0$
such that there are points $q_{j}^{\varepsilon}\in P_{j}^{\varepsilon}$
for which  $B_{g}(q_{j}^{\varepsilon},\varepsilon)\subset P_{j}^{\varepsilon}\subset B_{g}(q_{j}^{\varepsilon},r_{2}(\varepsilon))\subset B_{g}(q_{j}^{\varepsilon},r_{1}(\varepsilon))$,
with $r_{1}(\varepsilon)\ge r_{2}(\varepsilon)\ge C\varepsilon$ for
some positive constant $C$; lastly, there exists a finite number
$\nu(M)\in\mathbb{N}$ such that every $\xi\in M$ is contained in
at most $\nu(M)$ balls $B_{g}(q_{j}^{\varepsilon},r_{1}(\varepsilon))$,
where $\nu(M)$ does not depends on $\varepsilon$.
\begin{rem}
\label{lem:gamma} We recall that there exists a constant $\gamma>0$
such that, for any $\delta>0$ and for any $\varepsilon<\varepsilon_{0}(\delta)$
as in Proposition \ref{prop:phieps}, given any ``good'' partition
${\mathcal P}_{\varepsilon}=\left\{ P_{j}^{\varepsilon}\right\} _{j}$
of the manifold $M$ and for any function $u\in{\mathcal N}_{\varepsilon}\cap I_{\varepsilon}^{m_{\infty}+\delta}$
there exists, for an index $\bar{j}$ a set $P_{\bar{j}}^{\varepsilon}$
such that 
\begin{equation}
\frac{1}{\varepsilon^{3}}\int_{P_{\bar{j}}^{\varepsilon}}|u^{+}|^{p}dx\ge\gamma.\label{eq:gamma}
\end{equation}
Indeed we can proceed verbatim as in Lemma 12 of \cite{GMruf}, considering
that, since $I'(u)[u]=0$,
\begin{eqnarray*}
\|u\|_{\varepsilon}^{2} & = & |u^{+}|_{\varepsilon,p}^{p}-\frac{1}{\varepsilon^{3}}\int_{M}\omega u^{2}\psi(u)\le|u^{+}|_{\varepsilon,p}^{p}\\
 & = & \sum_{j}|u_{j}^{+}|_{\varepsilon,p}^{p}\le\max_{j}\left\{ |u_{j}^{+}|_{\varepsilon,p}^{p-2}\right\} \sum_{j}|u_{j}^{+}|_{\varepsilon,p}^{2}
\end{eqnarray*}
where $u_{j}^{+}$ is the restriction of the function $u^{+}$on the
set $P_{j}$, and arguing as in Lemma 5.3 of \cite{BBM}, we obtain
that for some $C>0$ it holds $\sum_{j}|u_{j}^{+}|_{\varepsilon,p}^{2}\le C\nu\|u^{+}\|_{\varepsilon}^{2},$
and there the proof follows since 
\[
\max_{j}\left\{ |u_{j}^{+}|_{\varepsilon,p}^{p-2}\right\} \ge\frac{1}{C\nu}.
\]
\end{rem}
\begin{prop}
\label{prop:conc}For any $\eta\in(0,1)$ there exists $\delta_{0}<m_{\infty}$
such that for any $\delta\in(0,\delta_{0})$ and any $\varepsilon\in(0,\varepsilon_{0}(\delta))$
as in Proposition \ref{prop:phieps}, for any function $u\in{\mathcal N}_{\varepsilon}\cap I_{\varepsilon}^{m_{\infty}+\delta}$
we can find a point $q=q(u)\in M$ such that 
\[
\int_{B_{g}(q,r/2)}\Gamma(u)>\left(1-\eta\right)m_{\infty}.
\]
\end{prop}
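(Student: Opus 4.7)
The plan is a contradiction argument based on mass concentration at a single point. Suppose the statement fails for some $\eta\in(0,1)$: passing to sequences we obtain $\delta_n\downarrow 0$, $\varepsilon_n\downarrow 0$ and $u_n\in\mathcal{N}_{\varepsilon_n}\cap I_{\varepsilon_n}^{m_\infty+\delta_n}$ with
\[
\int_{B_g(q,r/2)}\Gamma(u_n)\,d\mu_g\le(1-\eta)m_\infty\qquad\text{for every } q\in M.
\]
Since $u_n$ lies on the Nehari manifold, the identities in (\ref{eq:I-nehari}) give uniform bounds on $\|u_n\|_{\varepsilon_n}$, $|u_n^+|_{\varepsilon_n,p}$ and $G_{\varepsilon_n}(u_n)$.

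The first step is to localize the $L^p$-mass via Remark \ref{lem:gamma}. Choose a good partition $\mathcal{P}_{\varepsilon_n}$ whose radii $r_1(\varepsilon_n),r_2(\varepsilon_n)$ are both comparable to $\varepsilon_n$; the remark then produces an index $\bar j_n$ with $\frac{1}{\varepsilon_n^3}\int_{P_{\bar j_n}^{\varepsilon_n}}(u_n^+)^p\,d\mu_g\ge\gamma$. Set $q_n:=q_{\bar j_n}^{\varepsilon_n}$, so that $P_{\bar j_n}^{\varepsilon_n}\subset B_g(q_n,C\varepsilon_n)$ and, up to a subsequence, $q_n\to q^{\ast}\in M$. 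Define the rescaled profile
\[
\tilde u_n(z):=u_n\bigl(\exp_{q_n}(\varepsilon_n z)\bigr)\chi(|\varepsilon_n z|),\qquad z\in\mathbb{R}^3.
\]
The metric expansion (\ref{eq:gexp}) yields $\|\tilde u_n\|_{H^1(\mathbb{R}^3)}\le C\|u_n\|_{\varepsilon_n}$, so up to a further subsequence $\tilde u_n\rightharpoonup\tilde u$ weakly in $H^1(\mathbb{R}^3)$ and strongly in $L^s_{\mathrm{loc}}(\mathbb{R}^3)$ for $1\le s<6$. The rescaled form of the $\gamma$-concentration is supported in a fixed ball $B(0,C')$, and strong $L^p_{\mathrm{loc}}$-convergence then forces $\tilde u\not\equiv 0$.

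Next I would show that $\tilde u$ solves the limit system (\ref{eq:PL}). This is a direct adaptation of the computation in Lemma \ref{lem:stimaGeps}: testing the two equations satisfied by $u_n$ and $\psi_{\varepsilon_n}(u_n)$ against $f_k(x):=f(\varepsilon_n^{-1}\exp_{q_n}^{-1}(x))$ for $f\in C_0^\infty(\mathbb{R}^3)$ and using (\ref{eq:gexp}) to pass to the limit, one obtains that $\tilde u$ satisfies (\ref{eq:PL}) and, along the way, that the rescaled field $\tilde\psi_n(z):=\psi_{\varepsilon_n}(u_n)(\exp_{q_n}(\varepsilon_n z))\chi(|\varepsilon_n z|)$ converges weakly in $H^1(\mathbb{R}^3)$ to $\psi_\infty(\tilde u)$. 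Since $\tilde u$ is a nontrivial solution, $\tilde u\in\mathcal{N}_\infty$ and therefore $I_\infty(\tilde u)\ge m_\infty$.

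To close the contradiction I must prove $\liminf_n\int_{B_g(q_n,r/2)}\Gamma(u_n)\,d\mu_g\ge I_\infty(\tilde u)$. After the change of variables $x=\exp_{q_n}(\varepsilon_n z)$ the integrand splits into a local $L^p$ piece and a non-local Coulomb piece. Fatou handles the first, yielding $\liminf\ge\left(\frac{1}{2}-\frac{1}{p}\right)|\tilde u^+|_p^p$. For the second, I would combine the weak convergence $\tilde\psi_n\rightharpoonup\psi_\infty(\tilde u)$ with the strong $L^{12/5}_{\mathrm{loc}}$ convergence $\tilde u_n^2\to\tilde u^2$, and control the tail $\{|z|>R\}$ via Hölder together with the uniform $L^{12/5}\times L^{6}$-bounds on $\tilde u_n^2$ and $\tilde\psi_n$ coming from the $H^1$-bounds, to identify the limit as $\frac{\omega}{4}G(\tilde u)$. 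Adding the two contributions gives $\liminf\int_{B_g(q_n,r/2)}\Gamma(u_n)\,d\mu_g\ge I_\infty(\tilde u)\ge m_\infty>(1-\eta)m_\infty$, which is the required contradiction, so the proposition holds with $\delta_0$ chosen so that $\eta m_\infty>\delta_0$. The main obstacle is precisely this non-local piece: $\psi_{\varepsilon_n}(u_n)|_{B_g(q_n,r/2)}$ is a priori affected by the mass of $u_n$ far from $q_n$, so transporting the Coulomb coupling from the manifold to $\mathbb{R}^3$ and controlling these tails uniformly in $n$ is what the argument really turns on.
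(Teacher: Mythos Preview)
There is a genuine gap at the step where you claim that the weak limit $\tilde u$ solves the limit system (\ref{eq:PL}). You write that you will ``test the two equations satisfied by $u_n$ and $\psi_{\varepsilon_n}(u_n)$'', but $u_n$ does \emph{not} satisfy any equation: membership in $\mathcal{N}_{\varepsilon_n}$ only gives $I_{\varepsilon_n}'(u_n)[u_n]=0$, not $I_{\varepsilon_n}'(u_n)=0$. The computation in Lemma~\ref{lem:stimaGeps} transfers the equation for $\psi_{\varepsilon}$ to the limit, but there is no analogous equation for $u_n$ to transfer. Consequently you cannot conclude $\tilde u\in\mathcal{N}_\infty$, and the inequality $I_\infty(\tilde u)\ge m_\infty$ is unjustified. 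The paper repairs this by first working on the sublevel $I_{\varepsilon}^{m_\varepsilon+2\delta}$ and invoking Ekeland's variational principle, which replaces $u_k$ by an almost--Palais--Smale sequence satisfying $|I_{\varepsilon_k}'(u_k)[\varphi]|\le\sigma_k\|\varphi\|_{\varepsilon_k}$ with $\sigma_k\to 0$; this is exactly what is needed to pass the first equation to the limit. Only afterwards does the paper prove $m_\varepsilon\to m_\infty$ and deduce the statement for the sublevel $I_{\varepsilon}^{m_\infty+\delta}$.

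There is a second, more subtle issue in your final step. You aim for $\liminf_n\int_{B_g(q_n,r/2)}\Gamma(u_n)\ge I_\infty(\tilde u)$, but the Coulomb term in $\Gamma$ enters with a \emph{minus} sign, so Fatou and weak convergence go the wrong way: mass of $u_n$ escaping to infinity (in rescaled variables) could make $\int u_n^2\psi_{\varepsilon_n}(u_n)$ strictly larger than $G(\tilde u)$ in the limit, and hence $\Gamma$ smaller. Your proposed tail control via uniform $L^{12/5}\times L^6$ bounds shows the tail is bounded, not that it vanishes. The paper sidesteps this by restricting to a fixed ball $B(0,T)$, where \emph{strong} $L^t_{\mathrm{loc}}$ convergence of both $w_k$ and $\tilde\psi_k$ identifies the limit of the local $\Gamma$-integral exactly, and then uses the inclusion $B_g(q_k,\varepsilon_kT)\subset B_g(q_k,r/2)$ together with the contradiction hypothesis.
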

\begin{proof}
First, we prove the proposition for $u\in{\mathcal N}_{\varepsilon}\cap I_{\varepsilon}^{m_{\varepsilon}+2\delta}$. 

By contradiction, we assume that there exists $\eta\in(0,1)$ such
that we can find two sequences of vanishing real number $\delta_{k}$
and $\varepsilon_{k}$ and a sequence of functions $\left\{ u_{k}\right\} _{k}$
such that $u_{k}\in{\mathcal N}_{\varepsilon_{k}}$, 
\begin{equation}
m_{\varepsilon_{k}}\le I_{\varepsilon_{k}}(u_{k})=\left(\frac{1}{2}-\frac{1}{p}\right)\|u_{k}\|_{\varepsilon_{k}}^{2}+\omega\left(\frac{1}{4}-\frac{1}{p}\right)G_{\varepsilon_{k}}(u_{k})\le m_{\varepsilon_{k}}+2\delta_{k}\le m_{\infty}+3\delta_{k}\label{eq:mepsk}
\end{equation}
 for $k$ large enough (see Remark \ref{rem:limsup}), and, for any
$q\in M$, 
\[
\int_{B_{g}(q,r/2)}\Gamma(u_{k})\le\left(1-\eta\right)m_{\infty}.
\]
By Ekeland principle and by definition of ${\mathcal N}_{\varepsilon_{k}}$
we can assume 
\begin{equation}
\left|I'_{\varepsilon_{k}}(u_{k})[\varphi]\right|\le\sigma_{k}\|\varphi\|_{\varepsilon_{k}}\text{ where }\sigma_{k}\rightarrow0.\label{eq:ps}
\end{equation}
By Remark \ref{lem:gamma} there exists a set $P_{k}^{\varepsilon_{k}}\in{\mathcal P}_{\varepsilon_{k}}$
such that 
\[
\frac{1}{\varepsilon_{k}^{3}}\int_{P_{k}^{\varepsilon_{k}}}|u_{k}^{+}|^{p}d\mu_{g}\ge\gamma,
\]
 so, we choose a point $q_{k}\in P_{k}^{\varepsilon_{k}}$ and we
define, in analogy with the proof of Lemma \ref{lem:stimaGeps}

\[
w_{k}(z):=u_{k}(\exp_{q_{k}}(\varepsilon_{k}z))\chi(\varepsilon_{k}|z|)
\]
where $z\in B(0,r/\varepsilon_{k})\subset\mathbb{R}^{3}$. Extending
trivially $w_{k}$ by zero to the whole $\mathbb{R}^{3}$ we have
that $w_{k}\in H^{1}(\mathbb{R}^{3})$ and, by (\ref{eq:mepsk}),
\[
\|w_{k}\|_{H^{1}(\mathbb{R}^{3})}^{2}\le C\|u_{k}\|_{\varepsilon_{k}}^{2}\le C.
\]
So there exists a $w\in H^{1}(\mathbb{R}^{3})$ such that, up to subsequences,
$w_{k}\rightarrow w$ weakly in $H^{1}(\mathbb{R}^{3})$ and strongly
in $L_{\text{loc}}^{t}(\mathbb{R}^{3})$ for $2\le t<6$. Moreover
we set $\psi_{k}(x):=\psi_{\varepsilon}(u_{k})(x)$ and $\tilde{\psi}_{k}=\psi_{k}(\exp_{q_{k}}(\varepsilon_{k}z))\chi(\varepsilon_{k}|z|)$.
Arguing as in Lemma \ref{lem:stimaGeps} we get that $\tilde{\psi}_{k}\rightarrow\psi_{\infty}(w)$
weakly in $H^{1}(\mathbb{R}^{3})$ and strongly in $L_{\text{loc}}^{t}(\mathbb{R}^{3})$
for all $2\le t<6$. 

Again, given $f\in C_{0}^{\infty}(\mathbb{R}^{3})$, with $\text{spt }f\subset B(0,T)$
for some $T>0$ we can define 
\[
f_{k}(x):=f\left(\frac{1}{\varepsilon_{k}}\exp_{\xi}^{-1}(x)\right)
\]
and, by (\ref{eq:ps}) we have $\left|I'_{\varepsilon_{k}}(u_{k})[f_{k}]\right|\rightarrow0$
as $k\rightarrow\infty$. Now, by change of variables we have 
\begin{align*}
I'_{\varepsilon_{k}}(u_{k})[f_{k}]= & \frac{1}{\varepsilon_{k}^{3}}\int_{M}\varepsilon_{k}^{2}\nabla_{g}u_{k}\nabla_{g}f_{k}+u_{k}f_{k}+\omega qu_{k}\psi_{k}f_{k}-(u_{k}^{+})^{p-1}f_{k}d\mu_{g}\\
= & \int_{B(0,T)}\left[g_{ij}(\varepsilon_{k})\partial_{i}w_{k}\partial_{j}f+w_{k}f+\omega qw_{k}\tilde{\psi}_{k}f-(w_{k}^{+})^{p-1}f\right]|g(\varepsilon_{k}z)|^{\frac{1}{2}}dz\\
= & \int_{\mathbb{R}^{3}}\nabla w_{k}\nabla f+w_{k}f+\omega qw_{k}\tilde{\psi}_{k}f-(w_{k}^{+})^{p-1}fdz+o(\varepsilon_{k})\\
\rightarrow & \int_{\mathbb{R}^{3}}\nabla w\nabla f+wf+\omega qw\psi_{\infty}(w)f-(w_{k}^{+})^{p-1}fdz=I'_{\infty}(w)[f]
\end{align*}
and, by (\ref{eq:ps}), we get that $w$ is a weak solution of the
limit problem (\ref{eq:PL}) and that $w\in\mathcal{N}_{\infty}$.
By Lemma \ref{lem:gamma} and by the choice of $q_{k}$ we have that
$w\ne0$, so $w>0$ and $I_{\infty}(w)\ge m_{\infty}$. 

Now, consider the functions 
\[
h_{k}:=\frac{1}{\varepsilon^{3}}|u_{k}^{+}|^{\frac{1}{p}}(\exp_{q_{k}}(\varepsilon_{k}z))|g_{q_{k}}(\varepsilon_{k}z)|^{\frac{1}{2p}}\mathbb{I}_{B_{g}(q_{k},r)}
\]
 where $\mathbb{I}_{B_{g}(q_{k},r)}$ is the indicatrix function on
$B_{g}(q_{k},r)$. Since $|u_{k}|_{\varepsilon,p}$ is bounded, then
$h_{k}$ is bounded in $L^{p}(\mathbb{R}^{3})$ so, it converges weakly
to some $\bar{h}\in L^{p}(\mathbb{R}^{3})$. We have that $h=|w^{+}|^{\frac{1}{p}}$.
Take $f\in C_{0}^{\infty}(\mathbb{R}^{3})$, with $\text{spt }f\subset B(0,T)$
for some $T>0$. Since, eventually $B(0,T)\subset B(0,r/2\varepsilon_{k})$,
$|u_{k}^{+}|^{\frac{1}{p}}(\exp_{q_{k}}(\varepsilon_{k}z))=w_{k}^{+}$
on $B(0,T)$. Moreover, on $B(0,T)$ we have that $|g_{q_{k}}(\varepsilon_{k}z)|^{\frac{1}{2p}}=1+o(\varepsilon_{k})$.
Thus, since $w_{k}\rightharpoonup w$ in $L^{p}(\mathbb{R}^{3})$
we get. 
\[
\int_{\mathbb{R}^{3}}h_{k}fdz\rightarrow\int_{\mathbb{R}^{3}}|w^{+}|^{\frac{1}{p}}fdz
\]
for any $f\in C_{0}^{\infty}(\mathbb{R}^{3})$. In the same way we
can consider the functions 
\[
j_{k}=\frac{1}{\varepsilon^{3}}\left(g_{ij}(\varepsilon_{k}z)\partial_{i}u_{k}(\exp_{q_{k}}(\varepsilon_{k}z))\partial_{j}u_{k}(\exp_{q_{k}}(\varepsilon_{k}z))|g_{q_{k}}(\varepsilon_{k}z)|^{\frac{1}{2}}\right)^{\frac{1}{2}}\mathbb{I}_{B_{g}(q_{k},r)}
\]
\[
l_{k}:=\frac{1}{\varepsilon^{3}}|u_{k}|^{\frac{1}{2}}(\exp_{q_{k}}(\varepsilon_{k}z))|g_{q_{k}}(\varepsilon_{k}z)|^{\frac{1}{4}}\mathbb{I}_{B_{g}(q_{k},r)}
\]
We have that $j_{k},l_{k}\in L^{2}(\mathbb{R}^{3})$ and that $j_{k}\rightharpoonup|\nabla w|^{\frac{1}{2}}$,
$l_{k}\rightharpoonup|w|^{\frac{1}{2}}$ in $L^{2}(\mathbb{R}^{3})$.
Thus we have 

At this point, since $w\in\mathcal{N}_{\infty}$ and by (\ref{eq:mepsk})
we get 
\begin{align*}
m_{\infty}\le & I_{\infty}(w)=\frac{1}{4}\|w\|_{H^{1}}^{2}+\left(\frac{1}{4}-\frac{1}{p}\right)|w^{+}|_{p}^{p}\\
\le & \liminf_{k\rightarrow\infty}\frac{1}{4}\|j_{k}\|_{L^{2}}^{2}+\frac{1}{4}\|i_{k}\|_{L^{2}}^{2}+\left(\frac{1}{4}-\frac{1}{p}\right)|h_{k}|_{p}^{p}\\
\le & \frac{1}{4}\|u_{k}\|_{\varepsilon}^{2}+\left(\frac{1}{4}-\frac{1}{p}\right)|u_{k}^{+}|_{p}^{p}\le m_{\infty}+3\delta_{k}
\end{align*}
so we have that $w$ is a ground state for the limit problem (\ref{eq:PL}).

Given $T>0$, by the definition of $w_{k}$ we get, for $k$ large
enough

\begin{multline}
\int_{B(0,T)}\left[\left(\frac{1}{2}-\frac{1}{p}\right)\left(w_{k}^{+}\right)^{p}-\frac{\omega}{4}w_{k}^{2}\tilde{\psi}_{k}\right]g(\varepsilon_{k}z)dz\\
=\frac{1}{\varepsilon^{3}}\int_{B(q_{k},\varepsilon_{k}T)}\left(\frac{1}{2}-\frac{1}{p}\right)\left(u_{k}^{+}\right)^{p}-\frac{\omega}{4}u_{k}^{2}\psi_{\varepsilon}(u_{k})d\mu_{g}\\
=\int_{B(q_{k},\varepsilon_{k}T)}\Gamma(u_{k})dx\le\int_{B(q_{k},r/2)}\Gamma(u_{k})dx\le\left(1-\eta\right)m_{\infty}\label{eq:contr}
\end{multline}
and, if we choose $T$ sufficiently big, this leads to a contradiction
since $w_{k}\rightarrow w$ and $\tilde{\psi}_{k}\rightarrow\psi_{\infty}(w)$
in $L^{t}(B(0,T))$ for any $T>0$. Since $m_{\infty}=I_{\infty}(w)=\left(\frac{1}{2}-\frac{1}{p}\right)|w^{+}|^{p}-\frac{\omega}{4}G(w)$,
it is possible to choose $T$ such that (\ref{eq:contr})$ $ is false,
so the lemma is proved for $u\in{\mathcal N}_{\varepsilon}\cap I_{\varepsilon}^{m_{\varepsilon}+2\delta}$.

The above arguments also prove that 
\[
\liminf_{k\rightarrow\infty}m_{\varepsilon_{k}}\ge\lim_{k\rightarrow\infty}I_{\varepsilon_{k}}(u_{k})=m_{\infty}.
\]
and, in light of (\ref{eq:limsup}), this leads to 
\begin{equation}
\lim_{\varepsilon\rightarrow0}m_{\varepsilon}=m_{\infty}.\label{eq:mepsminfty}
\end{equation}
Hence, when $\varepsilon,\delta$ are small enough, ${\mathcal N}_{\varepsilon}\cap I_{\varepsilon}^{m_{\infty}+\delta}\subset{\mathcal N}_{\varepsilon}\cap I_{\varepsilon}^{m_{\varepsilon}+2\delta}$
and the general claim follows.\end{proof}
\begin{prop}
There exists $\delta_{0}\in(0,m_{\infty})$ such that for any $\delta\in(0,\delta_{0})$
and any $\varepsilon\in(0,\varepsilon(\delta_{0})$ (see Proposition
\ref{prop:phieps}), for every function $u\in{\mathcal N}_{\varepsilon}\cap I_{\varepsilon}^{m_{\infty}+\delta}$
it holds $\beta(u)\in M^{+}$. Moreover the composition 
\[
\beta\circ\Phi_{\varepsilon}:M\rightarrow M^{+}
\]
 is s homotopic to the immersion $i:M\rightarrow M^{+}$ \end{prop}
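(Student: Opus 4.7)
The plan is to prove the two claims of the proposition separately: first, that $\beta(u)\in M^+$ for every $u\in\mathcal{N}_\varepsilon\cap I_\varepsilon^{m_\infty+\delta}$; second, that $\beta\circ\Phi_\varepsilon\colon M\to M^+$ is homotopic to the inclusion $i\colon M\hookrightarrow M^+$.

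For the first claim, I would fix $\eta\in(0,1)$ small (with a threshold to be determined) and invoke Proposition~\ref{prop:conc} to obtain a point $q=q(u)\in M$ with $\int_{B_g(q,r/2)}\Gamma(u)\,d\mu_g>(1-\eta)m_\infty$. Writing
\[
\beta(u)-q=\frac{1}{I_\varepsilon(u)}\int_M(x-q)\,\Gamma(u)\,d\mu_g,
\]
the denominator satisfies $I_\varepsilon(u)\ge m_\varepsilon\ge m_\infty/2$ for $\varepsilon$ small by (\ref{eq:mepsminfty}). For the numerator I would split the domain into $B_g(q,r/2)$ and its complement. On $B_g(q,r/2)$ the factor $|x-q|_{\mathbb{R}^n}\le d_g(x,q)\le r/2$ gives a contribution bounded by $\tfrac{r}{2}\int_M|\Gamma(u)|$, and the Nehari identities in (\ref{eq:I-nehari}), combined with $I_\varepsilon(u)\le m_\infty+\delta$, yield uniform upper bounds on $|u^+|_{p,\varepsilon}^p$ and $G_\varepsilon(u)$, hence on $\int_M|\Gamma(u)|$. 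On $M\setminus B_g(q,r/2)$ the factor $|x-q|_{\mathbb{R}^n}$ is bounded by $\mathrm{diam}_{\mathbb{R}^n}(M)$, so the key step is to show that $\int_{M\setminus B_g(q,r/2)}|\Gamma(u)|$ is small when $\eta,\delta$ are small. This I would obtain by revisiting the rescaling-and-weak-convergence argument of Proposition~\ref{prop:conc}: the pullbacks of both $\varepsilon^{-3}|u^+|^p$ and $\varepsilon^{-3}u^2\psi_\varepsilon(u)$ via $\exp_q(\varepsilon\,\cdot)$ converge in $L^1_{\mathrm{loc}}(\mathbb{R}^3)$ to $|w^+|^p$ and $w^2\psi_\infty(w)$ respectively, where $w$ is a ground state for the limit problem, so the masses of $\Gamma^+(u)$ and $\Gamma^-(u)$ concentrate separately at $q$ up to $o(1)$. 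This yields $|\beta(u)-q|_{\mathbb{R}^n}<R$, hence $\beta(u)\in M^+$.

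For the second claim, I would first show that $\beta(\Phi_\varepsilon(\xi))\to\xi$ in $\mathbb{R}^n$ uniformly in $\xi\in M$ as $\varepsilon\to 0$. Via the change of variable $x=\exp_\xi(\varepsilon z)$ and the definition of $W_{\xi,\varepsilon}$, both numerator and denominator of $\beta(\Phi_\varepsilon(\xi))$ become integrals over $B(0,r/\varepsilon)\subset\mathbb{R}^3$ of densities converging in $L^1(\mathbb{R}^3)$ to the corresponding quantities for the ground state $U$; this uses $t_\varepsilon(W_{\xi,\varepsilon})\to 1$, Remark~\ref{w}, Lemma~\ref{lem:stimaGeps}, and the expansion (\ref{eq:gexp}). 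Since $\exp_\xi(\varepsilon z)\to\xi$ uniformly on compact sets in $z$ and uniformly in $\xi$, the ratio tends to $\xi$ uniformly. Thus for $\varepsilon$ small $|\beta(\Phi_\varepsilon(\xi))-\xi|_{\mathbb{R}^n}<R$ uniformly in $\xi$, and the straight-line homotopy
\[
H(t,\xi)=(1-t)\xi+t\,\beta(\Phi_\varepsilon(\xi)),\qquad (t,\xi)\in[0,1]\times M,
\]
takes values in $M^+$ since each segment lies in the Euclidean $R$-ball about $\xi\in M$, and it continuously deforms $i$ into $\beta\circ\Phi_\varepsilon$.

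The main obstacle is the sign-indefiniteness of $\Gamma(u)$: Proposition~\ref{prop:conc} controls the signed integral $\int\Gamma(u)$, but the barycenter estimate requires control of $\int|\Gamma(u)|$ on the complement of the concentration ball. The plan is to handle this by extracting from the proof of Proposition~\ref{prop:conc} the separate concentration of $\Gamma^+(u)$ and $\Gamma^-(u)$, which is available thanks to the strong $L^t_{\mathrm{loc}}$ convergences of the rescaled functions $w_k$ and $\tilde\psi_k$ already established there.
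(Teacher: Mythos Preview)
Your proposal is correct and follows the same skeleton as the paper: invoke Proposition~\ref{prop:conc} to locate the concentration point $q=q(u)$, estimate $|\beta(u)-q|$ by splitting $\int_M(x-q)\Gamma(u)$ over $B_g(q,r/2)$ and its complement, and for the second claim declare the homotopy ``standard'' (you spell it out via $\beta(\Phi_\varepsilon(\xi))\to\xi$ uniformly and the straight-line homotopy in $\mathbb{R}^n$, which is exactly what is meant). The one substantive difference is the treatment of the complement term. The paper simply writes
\[
\frac{\bigl|\int_{M\setminus B(q,r/2)}(x-q)\Gamma(u)\bigr|}{\int_M\Gamma(u)}\le 2\,\mathrm{diam}(M)\left(1-\frac{1-\eta}{1+\delta/m_\infty}\right),
\]
which tacitly treats $\Gamma(u)$ as nonnegative so that $\int_{M\setminus B}\Gamma\le\int_M\Gamma-\int_B\Gamma\le(m_\infty+\delta)-(1-\eta)m_\infty$. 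You correctly identify that $\Gamma$ is sign-indefinite and that one needs control of $\int_{M\setminus B}|\Gamma(u)|$; your plan to obtain this by showing that the two nonnegative constituents $\varepsilon^{-3}|u^+|^p$ and $\varepsilon^{-3}u^2\psi_\varepsilon(u)$ concentrate separately at $q$ is the rigorous route. It does follow from the ingredients already in the proof of Proposition~\ref{prop:conc}: combine the $L^t_{\mathrm{loc}}$ convergence of the rescaled $w_k$ and $\tilde\psi_k$ with the squeeze $m_\infty\le I_\infty(w)\le\liminf\bigl(\tfrac14\|u_k\|_{\varepsilon_k}^2+(\tfrac14-\tfrac1p)|u_k^+|_{\varepsilon_k,p}^p\bigr)\le m_\infty$, which forces the global totals $|u_k^+|_{\varepsilon_k,p}^p$, $\|u_k\|_{\varepsilon_k}^2$ and hence $G_{\varepsilon_k}(u_k)$ to converge to their limit-problem values, so the tails outside $B_g(q,r/2)$ of each piece vanish. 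In short, your argument is the paper's argument made honest on the sign issue.
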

\begin{proof}
By Proposition \ref{prop:conc}, for any function $u\in{\mathcal N}_{\varepsilon}\cap I_{\varepsilon}^{m_{\infty}+\delta}$,
for any $\eta\in(0,1)$ and for $\varepsilon,\delta$ small enough,
we can find a point $q=q(u)\in M$ such that 
\[
\int_{B(q,r/2)}\Gamma(u)>\left(1-\eta\right)m_{\infty}.
\]
Moreover, since $u\in{\mathcal N}_{\varepsilon}\cap I_{\varepsilon}^{m_{\infty}+\delta}$
we have 
\[
I_{\varepsilon}(u)=\int_{M}\Gamma(u)\le m_{\infty}+\delta.
\]
Hence 
\begin{eqnarray*}
|\beta(u)-q| & \le & \frac{\left|\int_{M}(x-q)\Gamma(u)\right|}{\int_{M}\Gamma(u)}\\
 & \le & \frac{\left|\frac{1}{\varepsilon^{3}}\int_{B(q,r/2)}(x-q)\Gamma(u)\right|}{\int_{M}\Gamma(u)}+\frac{\left|\frac{1}{\varepsilon^{3}}\int_{M\smallsetminus B(q,r/2)}(x-q)\Gamma(u)\right|}{\int_{M}\Gamma(u)}\\
 & \le & \frac{r}{2}+2\text{diam}(M)\left(1-\frac{1-\eta}{1+\delta/m_{\infty}}\right),
\end{eqnarray*}
and the second term can be made arbitrarily small, choosing $\eta$,
$\delta$ and $\varepsilon$ sufficiently small. The second claim
of the theorem is standard.
\end{proof}

\paragraph{Acknowledgments}

Every problem has a story and an inspiration. This one comes out from
an interesting remark of Professor Ireneo Peral during a conference
in Alghero (Italy). We hence would like to thank Prof. Peral for his
keen and inspiring suggestion.

\end{document}